\newtheorem{thm}{Theorem}[section]
\newtheorem{prop}[thm]{Proposition}
\newtheorem{cor}[thm]{Corollary}
\theoremstyle{remark} 
\newtheorem{paragr}[thm]{}
\theoremstyle{definition} 
\newtheorem{definition}[thm]{Definition}
\newcommand{\by}[1]{\overset{#1}{\longrightarrow}}
\newcommand{\To}{\longrightarrow}
\newcommand{\Hom}{\operatorname{\mathrm{Hom}}}
\newcommand{\sHom}{\mathbf{Hom}}
\newcommand{\ho}{\operatorname{\mathrm{Ho}}}
\renewcommand{\AA}{\mathbf{A}}
\newcommand{\PP}{\mathbf{P}}
\newcommand{\spec}[1]{\mathrm{Spec}(#1)}
\newcommand{\Sm}[1]{\mathit{Sm}_{/#1}}
\newcommand{\AH}{\mathscr{H}_{\AA^1}}
\newcommand{\birat}{b}
\newcommand{\birH}{\mathscr{H}_{\birat}}
\newcommand{\spaces}[1]{\mathit{sPr}({\Sm #1})}
\newcommand{\Lbir}{\operatorname{L}_\birat}
\newcommand{\LA}{\operatorname{L}_{\AA^1}}
\newcommand{\god}{\mathscr{G}^\bullet}
\newcommand{\sing}{\mathit{Sing}_{\AA^1}}
\newcommand{\exA}{\mathit{Ex}_{\AA^1}}
\def\TO#1{\mathrel{\hbox to #1mm{\rightarrowfill}}}
\def\OT#1{\mathrel{\hbox to #1mm{\leftarrowfill}}}
\numberwithin{equation}{thm}
\title[Homotopy theory of schemes and $R$-equivalence]{Homotopy theory of schemes\\ and $R$-equivalence}
\author[D.-C. Cisinski]{Denis-Charles Cisinski}
\address{Fakult\"at f\"ur Mathematik\\
Universit\"at Regensburg\\
93040 Regensburg\\
Deutschland}
\email{denis-charles.cisinski@mathematik.uni-regensburg.de}
\urladdr{http://www.mathematik.uni-regensburg.de/cisinski/index.html}
\author[B. Kahn]{Bruno Kahn}
\address{CNRS, Sorbonne Université and Université Paris Cité, IMJ-PRG\\ Case 247\\4 place
Jussieu\\75252 Paris Cedex 05\\France}
\email{bruno.kahn@imj-prg.fr}
\urladdr{https://webusers.imj-prg.fr/~bruno.kahn/}
\begin{document}
\begin{abstract}
We prove that, for any smooth and projective scheme $X$ over a field $k$ of char.~$0$,
the set of maps from $\spec k$ to $X$ in the $\AA^1$-homotopy category of schemes $\AH(k)$
is in bijection with the quotient of $X(k)$ by $R$-equivalence, and is a birational
invariant of $X$. This is achieved by establishing a precise relation between the
localization of the category of smooth $k$-schemes by birational maps and the
category $\AH(k)$, and by applying results of the second named author and
R.~Sujatha on birational invariants. This gives a new proof of results obtained
by A.~Asok and F.~Morel \cite{AM}.
\end{abstract}

\maketitle
\tableofcontents

This manuscript was written in 2010, and updated in 2017. We make it public due to the interest of some mathematicians.

We fix once and for all a field $k$. By a $k$-variety, we shall mean an
integral separated $k$-scheme of finite type. Given a $k$-variety $X$,
$k(X)$ will stand for its field of functions.

\section{The birational homotopy category}

\begin{definition}[Manin]
Let $X$ be a $k$-scheme of finite type.

Two rational points $x_0$ and $x_1$ of $X$ are \emph{directly $R$-equivalent}
if there exists a rational map $f:\PP^1_k\dashrightarrow X$, defined at $0$ and $1$, such that
$f(0)=x_0$ and $f(x_1)=x_1$.

The \emph{$R$-equivalence} on $X(k)$ is the equivalence relation
generated on $X(k)$ by the direct $R$-equivalence.
\end{definition}

\begin{paragr}
Let $\Sm k$ be the category of separated and smooth $k$-schemes of finite type.
We denote by $S_b$ be the class of birational morphisms in $\Sm k$.
We denote by $S^{-1}_b\Sm k$ the localization of $\Sm k$ by $S_b$.
\end{paragr}

\begin{prop}[Colliot-Th\'el\`ene]\label{prop:birA1hmt0}
The projection $\AA^1_{k}\To \spec k$ is an isomorphism in $S^{-1}_b\Sm k$.
\end{prop}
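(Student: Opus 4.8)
The plan is to reduce to $\PP^1$ and then construct an inverse of the structure morphism out of the blow-up of the affine plane at a point. Since the open immersion $\AA^1_k\hookrightarrow\PP^1_k$, realized as the complement of a rational point, is a birational morphism of smooth $k$-schemes, it becomes invertible in $S^{-1}_b\Sm k$; moreover $\spec k$ is the terminal object of $\Sm k$, so the projection $\AA^1_k\to\spec k$ is just the composite $\AA^1_k\hookrightarrow\PP^1_k\to\spec k$. It is therefore enough to show that the structure morphism $p\colon\PP^1_k\to\spec k$ is an isomorphism in $S^{-1}_b\Sm k$.

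To this end I would use the rational map $\nu\colon\AA^2_k\dashrightarrow\PP^1_k$, $(x,t)\mapsto[x:t]$, whose indeterminacy locus is the single reduced point $o=(0,0)$. Blowing up $o$ yields a proper birational morphism $\beta\colon X\to\AA^2_k$ of smooth surfaces through which $\nu$ factors: $\nu\circ\beta=\mu$ for an honest morphism $\mu\colon X\to\PP^1_k$. Being birational, $\beta$ lies in $S_b$ and is hence invertible in $S^{-1}_b\Sm k$. The one genuinely geometric input is the classical fact that $\mu$ restricts to an \emph{isomorphism} from the exceptional curve $E\cong\PP^1_k$ onto $\PP^1_k$ (the exceptional line parametrizes the directions at $o$, which is exactly what $\mu$ records). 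Granting this, $\mu$ admits the section $\sigma:=(E\hookrightarrow X)\circ(\mu|_E)^{-1}\colon\PP^1_k\to X$, and because $\beta$ contracts $E$ to $o$, the composite $\beta\circ\sigma\colon\PP^1_k\to\AA^2_k$ is constant; that is, $\beta\circ\sigma=\iota\circ p$, where $\iota\colon\spec k\to\AA^2_k$ denotes the point $o$.

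Working now in $S^{-1}_b\Sm k$, invertibility of $\beta$ gives
\begin{align*}
\operatorname{id}_{\PP^1_k}
&=\mu\circ\sigma
=\mu\circ\beta^{-1}\circ(\beta\circ\sigma)\\
&=\mu\circ\beta^{-1}\circ\iota\circ p
=r\circ p ,
\end{align*}
where $r:=\mu\circ\beta^{-1}\circ\iota\colon\spec k\to\PP^1_k$. On the other hand $p\circ r=p\circ\mu\circ\beta^{-1}\circ\iota$ is the structure morphism of $X$ composed with $\beta^{-1}$ and then $\iota$, i.e.\ the structure morphism of $\AA^2_k$ composed with $\iota$, which equals $\operatorname{id}_{\spec k}$ since $\iota$ is a section of it. Hence $p$ and $r$ are mutually inverse, so $p\colon\PP^1_k\to\spec k$ is an isomorphism in $S^{-1}_b\Sm k$; composing with the birational inclusion $\AA^1_k\hookrightarrow\PP^1_k$ finishes the proof.

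Apart from the identification of $\mu|_E$ with the identity of $\PP^1_k$, everything above is a formal manipulation in the localized category, relying only on the fact that birational morphisms of smooth $k$-schemes become invertible there. I expect that identification — and, more conceptually, the essential use of a genuinely \emph{proper} birational morphism rather than merely an open immersion — to be the crux of the argument.
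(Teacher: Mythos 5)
Your argument is correct, and it realizes the same underlying mechanism as the paper's proof --- produce a smooth surface carrying a proper birational morphism that contracts an exceptional curve $E\cong\PP^1_k$ to a rational point, while a second morphism restricts to an isomorphism $E\by{\sim}\PP^1_k$; the resulting section then forces $\PP^1_k\to\spec k$ to become invertible --- but with a genuinely different geometric realization. The paper, following Colliot-Th\'el\`ene, takes the del Pezzo surface $W$ resolving the standard birational map $\PP^2\dashrightarrow\PP^1\times\PP^1$, so that \emph{two} birational morphisms $a:W\to\PP^2$ and $b:W\to\PP^1\times\PP^1$ must be inverted, and concludes by a retract argument: $F(\pi)$ is split mono because the isomorphism $F(E_1)\by{\sim}F(\PP^1)$ factors through it, and split epi because $\pi$ has a section given by a rational point of $E_1$. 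You instead blow up $\AA^2_k$ at the origin and use the resolved projection $(x,t)\mapsto[x:t]$, which is not birational and need not be inverted; only the single blow-down $\beta$ enters $S_b$, and you produce an explicit two-sided inverse $r=\mu\circ\beta^{-1}\circ\iota$ of $p$. Your version is slightly more economical (one localized arrow instead of two, no proper ambient surfaces, and a completely explicit inverse), whereas the paper's choice of $W$ is made deliberately to exhibit the analogy with the argument of \cite[Prop. 2.2.3 a)]{birat1}. The one geometric input you isolate --- that the exceptional curve of the blow-up of $\AA^2_k$ at the origin maps isomorphically to $\PP^1_k$ under the resolved projection --- is indeed classical and immediate from the equation $xv=tu$ defining the blow-up inside $\AA^2_k\times\PP^1_k$, where $\mu$ is the second projection; so there is no gap.
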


\begin{proof}
As $\AA^1_k\simeq\PP^1_k$ in $S^{-1}_b\Sm k$, it follows from
(the proof of) the main assertion of \cite[Appendix A]{birgeom}.

Let us paraphrase Colliot-Th\'el\`ene's proof, in order to show its analogy with the proof of
\cite[Prop. 2.2.3 a)]{birat1}. Let $\phi$ be the birational isomorphism between $\PP^2$ and
$\PP^1\times \PP^1$, normalized via the isomorphism $\AA^2\by{\sim} \AA^1\times \AA^1$. Let $W$
be the (closure of the graph of) $\phi$: this is a del Pezzo surface, the blow-up of $\PP^2$ at
$\{0,\infty\}\subset \PP^1\equiv \PP^2-\AA^2$ (it is the same $W$ as in Colliot-Th\'el\`ene's
proof). Projected onto $\PP^1\times \PP^1$, it is its blow-up at $(\infty,\infty)$. Keeping the
notation of \cite[Appendix A]{birgeom}, we have a commutative diagram
\[\xymatrix{
& E_1\ar[dl]_\pi\ar[d]\ar[dr]^{\sim}\\
M_1\ar[d]& W\ar[dl]_a\ar[dr]^b&L_1\ar[d]\ar[dr]^\sim\\
\PP^2 && \PP^1\times \PP^1\ar[r]^{p_1}& \PP^1.
}\]

Here $M_1$ is one of the points blown-up in $\PP^2$, $E_1$ is its inverse image in $W$ (so that
$E_1\simeq \PP^1$) and $L_1$ is the (isomorphic) projection of $E_1$ in $\PP^1\times \PP^1$.
For any fonctur $F$ which transforms $a$ and $b$ into isomorphisms, the isomorphism
$F(E_1)\by{\sim} F(\PP^1)$ factors through $F(\pi)$; finally $\pi$ has a section, given by any
rational point of $E_1$, hence $F(\pi)$ must be an isomorphism.
\end{proof}

\begin{paragr}
Note that $S_b$ is closed under finite products in $\Sm k$,
so that the  localized category $S^{-1}_b\Sm k$ admits finite products, and
the localization functor $\Sm k\To S^{-1}_b\Sm k$ commutes with them.
The preceding proposition thus implies the following homotopy invariance property.
\end{paragr}

\begin{prop}\label{prop:birA1hmt}
For any separated smooth $k$-scheme of finite type $X$, the
projection $\AA^1_X\To X$ is an isomorphism in $S^{-1}_b\Sm k$.
\end{prop}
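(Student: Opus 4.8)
The plan is to recognize the projection $\AA^1_X\to X$ as a categorical product, computed inside $\Sm k$, of the projection $\AA^1_k\to\spec k$ with the identity of $X$, and then to push this product through the localization functor using the fact — recorded in the paragraph preceding the statement — that $\Sm k\to S^{-1}_b\Sm k$ commutes with finite products.

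First I would recall that $\Sm k$ admits finite products: the product of two objects is their fibre product over $k$, which is again separated, smooth and of finite type, and the terminal object is $\spec k$. Under this identification, for a fixed $X$ the canonical projection
\[
\AA^1_X=\AA^1_k\times_k X\To \spec k\times_k X=X
\]
is precisely the morphism $p\times\mathrm{id}_X$, where $p\colon\AA^1_k\To\spec k$ is the structural projection; indeed the base change of $p$ along the structural map $X\To\spec k$ coincides with the categorical product with $\mathrm{id}_X$ since $\spec k$ is terminal.

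Next, writing $\gamma\colon\Sm k\To S^{-1}_b\Sm k$ for the localization functor, I would invoke the compatibility of $\gamma$ with finite products to identify $\gamma(p\times\mathrm{id}_X)$ with $\gamma(p)\times\mathrm{id}_{\gamma(X)}$ in $S^{-1}_b\Sm k$. By Proposition~\ref{prop:birA1hmt0}, $\gamma(p)$ is an isomorphism; the identity of $\gamma(X)$ is trivially one; and in any category with finite products the product of two isomorphisms is an isomorphism (an explicit inverse being the product of the inverses). Hence $\gamma(\AA^1_X\To X)$ is an isomorphism, which is the assertion.

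I do not expect any real obstacle here: the argument is entirely formal once Proposition~\ref{prop:birA1hmt0} and the product-compatibility of localization are granted, and the genuine geometric content (Colliot-Th\'el\`ene's theorem) has already been spent in the absolute case. The only point deserving a word of care is the first step, namely the identification of $\AA^1_X\To X$ with $p\times\mathrm{id}_X$ as a morphism of $\Sm k$, so that "commuting with products" can be applied verbatim.
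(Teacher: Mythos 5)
Your argument is correct and is precisely the one the paper intends: the paragraph preceding the proposition records that $S_b$ is stable under finite products, so that $S^{-1}_b\Sm k$ has finite products and the localization functor preserves them, and the proposition is then deduced from Proposition~\ref{prop:birA1hmt0} by taking the product with $\mathrm{id}_X$, exactly as you do. Your write-up merely makes explicit the identification of $\AA^1_X\To X$ with $p\times\mathrm{id}_X$, which the paper leaves implicit.
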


\begin{paragr}
For $Y$ a smooth and proper $k$-variety, the direct $R$-equivalence
relation on $Y(k)$ coincides with the $\AA^1$-homotopy relation:
two rational points $y_0$ and $y_1$ are directly $R$-equivalent if and only if
there exists a map $f:\AA^1_k\To Y$ such that $f(0)=y_0$ and $f(1)=y_1$. (This follows from the valuative criterion of properness.) 
It thus follows from Proposition \ref{prop:birA1hmt} that, for
any smooth $k$-varieties $X$ and $Y$, with $Y$ proper, the canonical map
\begin{equation}\label{eq0}
Y(k(X))\To\Hom_{S_b^{-1}\Sm k}(X,Y)
\end{equation}
factors through $R$-equivalence, and induces a map
\begin{equation}\label{eq1}
Y(k(X))/R\To\Hom_{S_b^{-1}\Sm k}(X,Y) \, .
\end{equation}
We then have the following result; see \cite[Theorem 6.6.3]{birgeom}.
\end{paragr}

\begin{thm}\label{thm:Requiv}
Consider two
smooth $k$-varieties $X$ and $Y$, with $Y$ proper.
Then \eqref{eq1} is bijective.
\end{thm}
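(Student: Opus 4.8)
The plan is to establish the bijectivity of \eqref{eq1} by proving surjectivity and injectivity separately. As a preliminary reduction, I would first record that since every dense open immersion belongs to $S_b$, one has $\Hom_{S_b^{-1}\Sm k}(X,Y)=\varinjlim_{U}\Hom_{S_b^{-1}\Sm k}(U,Y)$, the colimit running over the dense open subschemes $U\subseteq X$; and that, $Y$ being proper and $X$ smooth, a rational map $X\dashrightarrow Y$ is automatically defined in codimension $\leq 1$ (apply the valuative criterion of properness at the discrete valuation rings $\mathcal O_{X,x}$ with $x$ of codimension $1$) and, $Y$ being separated, is determined by the induced point of $Y(k(X))$. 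Hence \eqref{eq0} identifies $Y(k(X))$ with the set of rational maps $X\dashrightarrow Y$, and what remains is to show that a morphism $X\to Y$ in $S_b^{-1}\Sm k$ is the same datum as a rational map $X\dashrightarrow Y$ modulo $R$-equivalence; that an $R$-equivalence yields an equality in $S_b^{-1}\Sm k$ is already the content of the paragraph preceding the statement (via Proposition~\ref{prop:birA1hmt} and the valuative criterion of properness).

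For surjectivity, I would start from an arbitrary zig-zag in $\Sm k$ representing a given morphism $X\to Y$, with backward arrows in $S_b$, and collapse it from the target end: a tail $V\xleftarrow{s}V'\xrightarrow{h}Y$ with $s$ birational is replaced, upon restricting $h$ to the preimage of the dense open on which $s$ is invertible, by a single rational map $V\dashrightarrow Y$. To absorb the remaining backward arrows I would use the structure of birational morphisms between smooth varieties together with the properness of $Y$ — replacing intermediate varieties by suitable smooth birational models (using resolution of singularities) over which the relevant compositions become defined. Iterating, the whole zig-zag collapses to a rational map $X\dashrightarrow Y$, i.e. to an element of $Y(k(X))$ inducing the given morphism.

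For injectivity I would take $y_0,y_1\in Y(k(X))$ inducing the same morphism in $S_b^{-1}\Sm k$, represent them by morphisms $f_0,f_1\colon U\to Y$ from a common dense open $U\subseteq X$, and unwind the equality of their classes, through the combinatorics of localisation, into a zig-zag of birational morphisms of smooth varieties relating $f_0$ and $f_1$. I would then refine this zig-zag, using the structure theory of birational maps (the weak factorisation theorem in characteristic $0$), into a chain of blow-ups and blow-downs along smooth centres, and observe that at each elementary step the only ambiguity between two lifts of a rational point occurs along the fibres of the exceptional projective bundle, whose rational points are pairwise $R$-equivalent (they lie on a common line), while Proposition~\ref{prop:birA1hmt} supplies the homotopy invariance needed to handle the contractions; propagating these elementary $R$-equivalences along the chain would give $y_0\sim_R y_1$. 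I expect this last part to be the main obstacle: one must control, all along the zig-zag, the interaction between the birational modifications and the $R$-equivalence relation — equivalently, the behaviour of the assignment $X\mapsto Y(k(X))/R$ with respect to the valuative and birational structure — which is precisely the technical core carried out in \cite[Theorem~6.6.3]{birgeom}, whose argument I would follow.
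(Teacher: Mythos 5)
The paper does not prove Theorem \ref{thm:Requiv} at all: it quotes it from \cite[Theorem 6.6.3]{birgeom}, which is exactly where your proposal also terminates, since you explicitly delegate ``the technical core'' to that reference. So, as far as this paper is concerned, you and the authors take the same route, namely a citation.

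Judged as a standalone argument, however, the parts you do spell out have genuine gaps, and it is worth naming them since they are precisely the difficulties that \cite{birgeom} is written to overcome. In the surjectivity step, collapsing a zig-zag from the target end produces a rational map $V\dashrightarrow Y$ which must then be precomposed with the next \emph{forward} arrow $V''\to V$ of the zig-zag; that arrow is an arbitrary morphism of $\Sm k$ and need not be dominant, so its image may lie entirely inside the indeterminacy locus and the composite rational map is simply undefined. ``Replacing intermediate varieties by suitable smooth birational models'' does not address this, because the problem is not smoothness of the models but non-dominance of the forward arrows. In the injectivity step, weak factorisation is not the right tool in the form you invoke it: it factors a single birational map between smooth proper varieties into blow-ups and blow-downs, whereas what you must control is the equivalence relation on zig-zags generated by commuting diagrams in $S_b^{-1}\Sm k$, whose elementary moves pass through auxiliary objects that are not birational modifications of $X$. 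Your observation that two rational points in a fibre of an exceptional projective bundle are directly $R$-equivalent (they lie on a line in a projective space) is correct and is genuinely part of the picture, but reducing the general comparison of two representatives to that elementary situation is the hard combinatorial content, not a corollary of it. In short, the proposal is an accurate pointer to \cite[Theorem 6.6.3]{birgeom} wrapped in a sketch that, on its own, would not close; since the paper itself offers nothing beyond the pointer, this should be regarded as agreement with the paper rather than as an independent proof.
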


\begin{paragr}
Let $\spaces k$ be the category of simplicial presheaves on the category $\Sm k$,
endowed with its projective model category structure: the weak equivalences
(resp. the fibrations) are the termwise simplicial weak equivalences (resp. the termwise
Kan fibrations). Note that, for this model category structure, all representable presheaves
are cofibrant.

Considering $S_b$ as a set of maps in $\spaces k$ via the Yoneda embedding,
we shall consider the left Bousfield localization $\Lbir\spaces k$
of $\spaces k$ by $S_b$. The cofibrations of $\Lbir\spaces k$
are thus the same as the cofibrations in $\spaces k$, while the fibrant
objects are the presheaves of Kan complexes $F$ on $\Sm k$, such that, for any
map $X\To Y$ in $S_b$, the induced map $F(Y)\To F(X)$ is a simplicial homotopy equivalence.
Remark that $\Lbir\spaces k$ can also be described as the left Bousfield
localization of $\spaces k$ by the set of dense open immersions, which means
that the fibrant objects of $\Lbir\spaces k$ may as well be described as the
presheaves of Kan complexes on $\Sm k$ which send dense open immersions to
simplicial homotopy equivalences. The weak equivalences of $\Lbir\spaces k$
will be called \emph{birational weak equivalences}. We write
$$\birH(k)=\ho(\Lbir\spaces k)$$
for the homotopy category of $\Lbir\spaces k$.
\end{paragr}

\begin{prop}\label{birhomtopyyoneda}
The Yoneda embedding $\Sm k\To \spaces k$ induces
a fully faithful functor
$$S^{-1}_b\Sm k\To \birH(k)\, .$$
In particular, 
given two smooth $k$-varieties $X$ and $Y$, with $Y$
proper, we have a canonical bijection
$$\Hom_{\birH(k)}(X,Y)\simeq Y(k(X))/R\, .$$
\end{prop}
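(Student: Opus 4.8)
The plan is to reduce everything to the full faithfulness of the canonical functor $\gamma\colon S^{-1}_b\Sm k\to\birH(k)$. This functor exists because the composite $\Sm k\to\spaces k\to\birH(k)$ inverts $S_b$: the elements of $S_b$ are, by construction, birational weak equivalences, hence become isomorphisms in $\birH(k)=\ho(\Lbir\spaces k)$. Granting full faithfulness, the ``in particular'' clause is immediate from Theorem~\ref{thm:Requiv}: for smooth $k$-varieties $X$ and $Y$ with $Y$ proper,
\[
\Hom_{\birH(k)}(X,Y)\ \simeq\ \Hom_{S^{-1}_b\Sm k}(X,Y)\ \simeq\ Y(k(X))/R .
\]

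To compute $\Hom_{\birH(k)}(X,Y)$ I would pass to a homotopical description. The projective model category $\spaces k$ presents the $\infty$-category $\mathcal{P}(\Sm k)=\mathrm{Fun}(\Sm k^{\mathit{op}},\mathcal{S})$ of presheaves of spaces, and $\Lbir\spaces k$ presents its left Bousfield localization at the image of $S_b$ under the Yoneda embedding; hence $\birH(k)$ is the homotopy category of the $\infty$-category $\mathcal{P}(\Sm k)[S_b^{-1}]$ of $S_b$-local presheaves. Representable presheaves are cofibrant in $\spaces k$ (and discrete in each simplicial degree, hence fibrant), so they are the ``honest'' representables of $\mathcal{P}(\Sm k)$, and for $X,Y\in\Sm k$ the set $\Hom_{\birH(k)}(X,Y)$ is $\pi_0$ of the mapping space $\mathrm{Map}_{\mathcal{P}(\Sm k)[S_b^{-1}]}(h_X,h_Y)$.

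The heart of the argument is then the equivalence $\mathcal{P}(\Sm k)[S_b^{-1}]\simeq\mathcal{P}\big(\Sm k[S_b^{-1}]_\infty\big)$, where $\Sm k[S_b^{-1}]_\infty$ denotes the $\infty$-categorical localization of $\Sm k$ at $S_b$: a presheaf of spaces on $\Sm k$ is $S_b$-local exactly when it inverts $S_b$, hence when it factors through $\Sm k[S_b^{-1}]_\infty$, and conversely. Under this equivalence $h_X$ corresponds to the Yoneda image of $X$ (the localization functor carries representables to representables), so the $\infty$-categorical Yoneda lemma gives $\mathrm{Map}_{\mathcal{P}(\Sm k)[S_b^{-1}]}(h_X,h_Y)\simeq\mathrm{Map}_{\Sm k[S_b^{-1}]_\infty}(X,Y)$. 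Since the homotopy category of the $\infty$-localization $\Sm k[S_b^{-1}]_\infty$ is the ordinary localization $S^{-1}_b\Sm k$ (apply the left adjoint $\ho$ to the universal property of localizations), passing to $\pi_0$ yields $\Hom_{\birH(k)}(X,Y)\simeq\Hom_{S^{-1}_b\Sm k}(X,Y)$; tracing the construction, this bijection is exactly the map induced by $\gamma$, so $\gamma$ is fully faithful.

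The main obstacle is bookkeeping rather than a hard theorem: one must verify that every equivalence in the chain is compatible with the relevant Yoneda and localization functors, so that the bijection produced at the end is genuinely the one induced by $\gamma$ rather than an abstract bijection of sets of the same cardinality. The two structural inputs used --- that left Bousfield-localizing a presheaf $\infty$-category at a set of maps between representables yields the presheaf $\infty$-category on the localized source, and that the homotopy category of an $\infty$-localization is the $1$-categorical localization --- are standard, but in the strictly model-categorical language of the paper they would have to be made explicit, for instance by exhibiting for each $Y$ a $b$-fibrant replacement of $h_Y$ whose connected components over $X$ form the set $\Hom_{S^{-1}_b\Sm k}(X,Y)$. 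One should beware here that $S_b$ is not stable under base change, so such a replacement is not given by a naive colimit over the dense open subschemes of $X$ and really does require the homotopical localization.
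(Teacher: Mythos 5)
Your argument is correct and is essentially the paper's own: the key equivalence you establish, $\mathcal{P}(\Sm k)[S_b^{-1}]\simeq\mathcal{P}\big(\Sm k[S_b^{-1}]_\infty\big)$ for a localization at maps between representables, is precisely the content of the Dwyer--Kan theorem (\cite[Theorem 2.2]{DK}) that the paper cites, with the simplicial localization replaced by its $\infty$-categorical avatar. The paper then concludes exactly as you do, via the Yoneda lemma, passage to homotopy categories, and Theorem~\ref{thm:Requiv}.
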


\begin{proof}
The first assertion is a straightforward consequence of
the general results of Dwyer and Kan on simplicial
presheaves on localizations of simplicial categories: by virtue of
\cite[Theorem 2.2]{DK}, $\Lbir\spaces k$ is (Quillen equivalent to)
the model category of simplicial presheaves on the simplicial
localization of $\Sm k$ by $S_b$. The second assertion follows immediately
from the first and from Theorem \ref{thm:Requiv}.
\end{proof}

\section{Birational localization and $\AA^1$-homotopy theory}

\begin{paragr}
We recall the description of Morel and Voevodsky's
$\AA^1$-homotopy category $\AH(k)$ \cite{MV} as the
homotopy category of a left Bousfield localization of the projective model category
structure on $\spaces k$.

Recall that a \emph{distinguished square} is a pullback square
in $\Sm k$ of the form
\begin{equation}\label{eq:distsq}
\begin{split}
\xymatrix{
U\times_{X} V\ar[r]\ar[d]&V\ar[d]^f\\
U\ar[r]^j&X
}\end{split}\end{equation}
in which $j$ is an open immersion, while $f$ is \'etale and
induces an isomorphism $f^{-1}((X-U)_{\mathit{red}})\simeq (X-U)_{\mathit{red}}$.
For such a square, we shall denote by $U\amalg^p_{U\times_X V} V$
the corresponding \emph{pushout of presheaves} over $\Sm k$.

Let $\LA\spaces k$ be the left Bousfield localization of $\spaces k$
by the projections $\AA^1_X\To X$ for any separated and smooth $k$-scheme $X$, and
by the maps of shape
$$U\amalg^p_{U\times_X V} V\To X$$
for any distinguished square \eqref{eq:distsq} in $\Sm k$.
The fibrant objects of $\LA\spaces k$ are thus the
presheaves of Kan complexes $F$ on $\Sm k$ which are
\emph{$\AA^1$-homotopy invariant}, i.e. such that, for any separated
smooth $k$-scheme $X$, the map
$$F(X)\To F(\AA^1_X)$$
is a simplicial homotopy equivalence, and which
have the \emph{Brown-Gersten property}, i.e. such that, for any
distinguished square \eqref{eq:distsq}, the induced diagram
\begin{equation}\label{eq:BG}
\begin{split}
\xymatrix{
F(X)\ar[r]\ar[d]&F(V)\ar[d]\\
F(U)\ar[r]&F(U\times_{X} V)
}\end{split}\end{equation}
is a homotopy pullback square in the homotopy category of
Kan complexes. The weak equivalences of the
model category $\LA\spaces k$ will be called
\emph{weak $\AA^1$-equivalences}.

The corresponding homotopy category
$\ho(\LA\spaces k)$ is canonically equivalent to the
$\AA^1$-homotopy category $\AH(k)$ of Morel and Voevodsky;
see \cite{blander}.
\end{paragr}

\begin{prop}\label{prop:birlocA1}
Any weak $\AA^1$-equivalence is a weak birational equivalence.
\end{prop}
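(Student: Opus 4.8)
The statement is that every weak $\AA^1$-equivalence is a birational weak equivalence. The plan is to argue at the level of model categories: since both $\LA\spaces k$ and $\Lbir\spaces k$ are left Bousfield localizations of the same projective model category $\spaces k$, it suffices to show that the identity functor $\spaces k \to \spaces k$ is a left Quillen functor $\LA\spaces k \to \Lbir\spaces k$; equivalently, that every $\Lbir$-fibrant object is $\AA^1$-local, i.e. is fibrant in $\LA\spaces k$. Concretely, let $F$ be a fibrant object of $\Lbir\spaces k$, so $F$ is a presheaf of Kan complexes sending every birational morphism (equivalently, every dense open immersion) to a simplicial homotopy equivalence. We must check that $F$ is $\AA^1$-homotopy invariant and has the Brown--Gersten property.

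For $\AA^1$-invariance, the key input is Proposition \ref{prop:birA1hmt}: for any separated smooth $k$-scheme $X$ of finite type, the projection $\AA^1_X \to X$ is an isomorphism in $S^{-1}_b\Sm k$, hence (by Proposition \ref{birhomtopyyoneda}, or directly) is a birational weak equivalence in $\spaces k$. Since $F$ is fibrant in $\Lbir\spaces k$, applying $F$ to a birational weak equivalence between representables yields a simplicial homotopy equivalence; thus $F(X) \to F(\AA^1_X)$ is a simplicial homotopy equivalence, which is exactly $\AA^1$-homotopy invariance. For the Brown--Gersten property, given a distinguished square \eqref{eq:distsq}, one observes that the comparison map $U \amalg^p_{U\times_X V} V \to X$ is a birational morphism of presheaves in the appropriate sense: the open immersion $j$ is dense (being the complement of a closed subscheme over which $f$ is an isomorphism, the square exhibits $X$ as glued from $U$ and $V$ along $U\times_X V$, and generically $V\to X$ is an isomorphism over $X-\overline{X-U}$... more carefully, $j\colon U\to X$ is dense because $X$ is integral and $U$ is a nonempty open), so that $F$ sends it to a weak equivalence, and this forces the square \eqref{eq:BG} to be a homotopy pullback. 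Actually the cleanest route here is: $\Lbir\spaces k$ is the localization at dense open immersions, so its fibrant objects already satisfy a Brown--Gersten-type descent trivially because $F$ sees $U\times_X V\to V$ and $U\to X$ as equivalences simultaneously, making \eqref{eq:BG} a square all of whose... no — rather, one shows $F$ inverts $U\amalg^p_{U\times_X V}V\to X$ and then \eqref{eq:BG} being a homotopy pullback is equivalent to that map being an equivalence, by the universal property of homotopy pushouts in simplicial presheaves.

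I expect the main obstacle to be the Brown--Gersten step, and specifically the verification that the map $U\amalg^p_{U\times_X V} V \to X$ is a birational weak equivalence. The subtlety is that $V$ need not be birational to $X$ (the étale map $f$ can fail to be dense — e.g. $V$ could be disconnected, or a double cover branched nowhere), so one cannot simply invoke that representables of birational schemes are identified. Instead one must argue that the \emph{presheaf} pushout $U\amalg^p_{U\times_X V}V$ maps to $X$ by a map that becomes invertible after applying any $F$ sending dense open immersions to homotopy equivalences: one covers $X$ Zariski-locally, reduces to the case where the closed complement $Z = (X-U)_{\mathit{red}}$ is small, uses that $j\colon U\to X$ is a dense open immersion (so $F(X)\xrightarrow{\sim} F(U)$ and likewise $F(V)\xrightarrow{\sim} F(U\times_X V)$ since $U\times_X V\to V$ is the pullback of a dense open immersion along a dominant map, hence dense), and concludes that in \eqref{eq:BG} the horizontal maps are both equivalences, so the square is trivially a homotopy pullback. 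This last observation — that $U\times_X V \hookrightarrow V$ is again a dense open immersion — is the geometric crux and follows because $f$ is étale (hence open and, on the relevant component, dominant) so the preimage of a dense open is dense. Once that is in hand, the whole proposition follows formally.
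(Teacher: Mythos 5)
Your overall strategy is exactly the paper's: reduce the statement to showing that every fibrant object $F$ of $\Lbir\spaces k$ (a presheaf of Kan complexes inverting dense open immersions) is local for $\LA\spaces k$, and check the two conditions, $\AA^1$-invariance and the Brown--Gersten property. The $\AA^1$-invariance step is identical to the paper's (it is exactly the content of Proposition \ref{prop:birA1hmt} applied to representables). For the Brown--Gersten property the paper offers only the word ``obviously''; your elaboration --- both restriction maps $F(X)\to F(U)$ and $F(V)\to F(U\times_X V)$ are equivalences because they are restrictions along dense open immersions, the second since \'etale maps are open and flat so preimages of dense opens are dense, whence the square is trivially a homotopy pullback, which is equivalent to $F$ inverting $U\amalg^p_{U\times_X V}V\to X$ --- is surely the argument the authors have in mind, and your preliminary remark that one cannot simply say ``$V$ is birational to $X$'' is a good catch.

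There is, however, one concretely false step: you justify the density of $j\colon U\to X$ by asserting that ``$X$ is integral and $U$ is a nonempty open.'' In a distinguished square \eqref{eq:distsq}, $X$ is an arbitrary object of $\Sm k$ --- possibly disconnected --- and $U$ may be empty or may miss entire connected components of $X$ (the extreme case $U=\emptyset$, $V\xrightarrow{\sim}X$, or $X=X_1\sqcup X_2$ with $U=X_1$, $V=X_2$, is a legitimate distinguished square). In that situation neither restriction map in \eqref{eq:BG} is along a dense open immersion, and the ``both legs are equivalences'' argument gives nothing; what is actually needed there is that $F$ carries finite disjoint unions to products (with $F(\emptyset)$ contractible), and that does \emph{not} follow from inverting dense open immersions. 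So your argument, as written, only treats the squares in which $U$ is dense in $X$. To be fair, the paper's ``obviously'' elides exactly the same point, so you have reconstructed the intended proof at the paper's own level of detail; but you should either restrict your density claim to the components of $X$ meeting $U$ and supply a separate argument (or an added hypothesis on $F$) for the remaining components, or at least flag that the non-dense case is not covered by the stated reasoning.
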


\begin{proof}
Let $F$ be a presheaf of Kan complexes on $\Sm k$ which sends
dense open immersions to simplicial homotopy equivalences.
It follows immediately from Proposition \ref{prop:birA1hmt} that $F$
is $\AA^1$-homotopy invariant.
Moreover, $F$ obviously has the Brown-Gersten property. This is equivalent to 
our assertion.
\end{proof}

As a byproduct, we get the following consequence from the general yoga of left Bousfield
localizations of model categories \cite{Hir}.

\begin{cor}\label{cor:HbirlocHA1}
The category $\birH(k)$ is the localization of $\AH(k)$ by the class
of weak birational equivalences, and the localization functor
$$\gamma_!: \AH(k)\To \birH(k)$$
has a fully faithful right adjoint
$$\gamma^*:\birH(k)\To \AH(k)$$
whose essential image is spanned by the presheaves of
Kan complexes which send
dense open immersions to simplicial homotopy equivalences.
\end{cor}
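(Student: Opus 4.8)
The plan is to derive Corollary~\ref{cor:HbirlocHA1} as a formal consequence of Proposition~\ref{prop:birlocA1} together with the general theory of left Bousfield localizations, as presented in~\cite{Hir}. The key observation is that, by construction, $\Lbir\spaces k$ is obtained from $\spaces k$ by left Bousfield localization at the set $S_b$ (or equivalently at the set of dense open immersions), while $\LA\spaces k$ is obtained by localizing at the projections $\AA^1_X\To X$ and the distinguished-square maps $U\amalg^p_{U\times_X V}V\To X$. Proposition~\ref{prop:birlocA1} asserts that every weak $\AA^1$-equivalence is a birational weak equivalence; in the language of left Bousfield localizations this says exactly that the localizing set defining $\Lbir\spaces k$ consists of weak $\AA^1$-equivalences, so that $\Lbir\spaces k$ is a further left Bousfield localization of $\LA\spaces k$. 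Concretely, one localizes $\LA\spaces k$ at the images of $S_b$ (or of the dense open immersions) and checks that the resulting model structure coincides with $\Lbir\spaces k$: the cofibrations agree (they are the cofibrations of $\spaces k$ in all three cases), and the fibrant objects agree because a fibrant object of the iterated localization is a presheaf of Kan complexes that is $\AA^1$-local \emph{and} birationally local, but Proposition~\ref{prop:birlocA1} shows birational locality already forces $\AA^1$-locality, so the two classes of fibrant objects coincide.

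Once $\Lbir\spaces k$ is identified as a left Bousfield localization of $\LA\spaces k$, the general theory~\cite{Hir} immediately yields a Quillen adjunction whose left adjoint is the identity functor $\LA\spaces k\To\Lbir\spaces k$; passing to homotopy categories gives the localization functor $\gamma_!\colon\AH(k)\To\birH(k)$, and identifies $\birH(k)$ with the localization of $\AH(k)$ at the birational weak equivalences. The right adjoint $\gamma^*$ is the derived functor of the (identity) right Quillen functor; it is fully faithful because in a left Bousfield localization the right adjoint $\R\mathrm{Id}$ is always fully faithful (its essential image is the full subcategory of local objects), a standard fact about Bousfield localizations. The essential image of $\gamma^*$ is therefore spanned by the $\Lbir\spaces k$-local objects, which by the description of fibrant objects recalled in the text are precisely the presheaves of Kan complexes sending dense open immersions to simplicial homotopy equivalences.

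The one point requiring a little care is the verification that the iterated localization of $\spaces k$ — first at the $\AA^1$- and Brown--Gersten maps, then at $S_b$ — gives back exactly $\Lbir\spaces k$ and not some a priori finer localization. This follows from the fact that a left Bousfield localization is determined by its class of local objects, and that localizing at a union of two sets of maps produces the localization whose local objects are those local with respect to both sets; since by Proposition~\ref{prop:birlocA1} the $S_b$-local (equivalently, dense-open-immersion-local) presheaves of Kan complexes are automatically $\AA^1$-invariant and Brown--Gersten, the class of ``$\AA^1$-local and $S_b$-local'' objects coincides with the class of ``$S_b$-local'' objects, hence the iterated localization and $\Lbir\spaces k$ have the same fibrant objects and therefore the same weak equivalences. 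I expect this bookkeeping with local objects to be the only genuine (if minor) obstacle; everything else is a direct invocation of the formalism of~\cite{Hir} and of the elementary properties of Quillen adjunctions arising from Bousfield localizations.
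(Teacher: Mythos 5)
Your proposal is correct and follows exactly the route the paper intends: the paper derives this corollary from Proposition~\ref{prop:birlocA1} by invoking ``the general yoga of left Bousfield localizations'' of~\cite{Hir} without further detail, and your argument is precisely that yoga spelled out (identifying $\Lbir\spaces k$ as a further left Bousfield localization of $\LA\spaces k$, with the fully faithful derived right adjoint landing in the local objects). No gaps.
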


\section{Rational points up to $\AA^1$-homotopy}

\begin{paragr}
Let $F$ be a presheaf of sets over $\Sm k$.
Given a separated smooth $k$-scheme $X$, we denote by
$[X,F]_{\AA^1}$ the quotient of $F(X)$ by the $\AA^1$-homotopy
relation. We thus have a canonical map
\begin{equation}\label{eq2}
[X,F]_{\AA^1}\To \Hom_{\AH(k)}(X,F)\, .
\end{equation}
In the case where $X=\spec k$ and $F=Y$ for a smooth and proper $k$-variety $Y$
we have
\begin{equation}
Y(k)/R=[\spec k,Y]_{\AA^1}\, .
\end{equation}
\end{paragr}

\begin{prop}[\protect{\cite[Cor. 2.1.5]{AM}}]\label{prop:A1surj}
For any presheaf of sets $F$ over $\Sm k$, \eqref{eq2}
becomes surjective after Nisnevich sheafification.
\end{prop}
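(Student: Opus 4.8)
The plan is to realize both sides of \eqref{eq2} as the $\pi_0$ of sections of explicit simplicial presheaves, and to build a functorial $\AA^1$-fibrant replacement of $F$ as a transfinite colimit of two elementary operations, each of which is transparent on $\pi_0$ after Nisnevich sheafification.

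First I would recall the $\AA^1$-singular functor $\sing$, which sends a simplicial presheaf $G$ on $\Sm k$ to $U\mapsto\operatorname{diag}\big(([m],[n])\mapsto G_m(U\times\Delta^n_k)\big)$, with $\Delta^n_k$ the algebraic $n$-simplex. Since $\Delta^0_k=\spec k$, the canonical map $G\to\sing G$ is a sectionwise bijection on $0$-simplices, so $\pi_0(G(U))\to\pi_0((\sing G)(U))$ is surjective for every $U$. Applied to the presheaf of sets $F$, this identifies the source of \eqref{eq2}: $[U,F]_{\AA^1}=\pi_0((\sing F)(U))$, naturally in $U$. On the other side, I would fix any functorial Nisnevich-local fibrant replacement $L_{\mathrm{Nis}}$ on $\spaces k$, with its natural Nisnevich-local weak equivalence $G\to L_{\mathrm{Nis}}G$; since a Nisnevich-local equivalence induces isomorphisms on all Nisnevich sheaves of homotopy groups, one has $a_{\mathrm{Nis}}\pi_0(G)\by{\sim}a_{\mathrm{Nis}}\pi_0(L_{\mathrm{Nis}}G)$ for every $G$.

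Next I would invoke the Morel--Voevodsky/Jardine model for $\AA^1$-fibrant replacement: put $F_0=F$, $F_{\alpha+1}=L_{\mathrm{Nis}}(\sing F_\alpha)$, and take sectionwise colimits at limit stages; for a suitable ordinal $\lambda$ the presheaf $\exA F:=F_\lambda$ is fibrant in $\LA\spaces k$, and the canonical map $F\to\exA F$ induces \eqref{eq2} on $\pi_0$ of sections at $X$ (representables are cofibrant, and $\exA F$ being fibrant, $\Hom_{\AH(k)}(X,F)=\pi_0((\exA F)(X))$). As colimits of simplicial presheaves are computed sectionwise, $\pi_0$ commutes with filtered colimits, and sheafification commutes with colimits, $a_{\mathrm{Nis}}\pi_0(\exA F)=\varinjlim_\alpha a_{\mathrm{Nis}}\pi_0(F_\alpha)$. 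By transfinite induction I would show that each structural map $a_{\mathrm{Nis}}\pi_0(F_\alpha)\to a_{\mathrm{Nis}}\pi_0(F_{\alpha'})$ ($\alpha\le\alpha'$) is an epimorphism of Nisnevich sheaves: in a successor step the factorization $F_\alpha\to\sing F_\alpha\to L_{\mathrm{Nis}}\sing F_\alpha=F_{\alpha+1}$ writes $a_{\mathrm{Nis}}\pi_0(F_\alpha)\to a_{\mathrm{Nis}}\pi_0(F_{\alpha+1})$ as an epimorphism (sectionwise surjective, then sheafified) followed by an isomorphism; limit steps are immediate because a filtered colimit of epimorphisms of sheaves is an epimorphism. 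Combining this with the isomorphism $a_{\mathrm{Nis}}\pi_0(\sing F)\by{\sim}a_{\mathrm{Nis}}\pi_0(F_1)$, the map $a_{\mathrm{Nis}}\pi_0(\sing F)\to\varinjlim_\alpha a_{\mathrm{Nis}}\pi_0(F_\alpha)=a_{\mathrm{Nis}}\pi_0(\exA F)$ is an epimorphism; and this map is exactly \eqref{eq2} Nisnevich-sheafified, which is the claim.

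The geometry of $\Sm k$ enters only through two black boxes: that $\sing$ leaves $0$-simplices untouched (a triviality), and that $\AA^1$-fibrant replacement admits the transfinite presentation above with $L_{\mathrm{Nis}}$ a genuine Nisnevich-descent replacement. The only point that really needs care is thus to set up this presentation correctly --- equivalently, to use that the left Bousfield localization of $\spaces k$ at the distinguished-square maps $U\amalg^p_{U\times_X V}V\to X$ is precisely the Nisnevich-local model structure, so that a presheaf is $\LA$-fibrant iff it is Nisnevich-fibrant and $\AA^1$-invariant, and to verify that iterating $L_{\mathrm{Nis}}\circ\sing$ (to $\omega$, or further) indeed produces an $\AA^1$-fibrant object. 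I expect this bookkeeping, rather than any single hard step, to be the main obstacle.
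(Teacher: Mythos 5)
Your proposal is correct and follows essentially the same route as the paper: both decompose the explicit $\AA^1$-fibrant replacement of Morel--Voevodsky into iterations of $\sing$ (surjective on $\pi_0$ of sections, since it does not change $0$-simplices) and a Nisnevich-local replacement (the paper uses the Godement resolution $\god$ and checks bijectivity of $\pi_0$ on henselian local sections, which is just the stalkwise formulation of your isomorphism on sheafified $\pi_0$). The only cosmetic difference is that the paper invokes the specific formula $\exA=\god\circ(\sing)^\omega\circ\god$ from \cite[Lemma 2.3.20]{MV} rather than your transfinite alternation, and works at Nisnevich points rather than with $a_{\mathrm{Nis}}\pi_0$ directly.
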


\begin{proof}
The assertion is equivalent to the following: for any henselian local ring $R$ of geometric origin, the map \eqref{eq2} evaluated at $X=\spec R$ is surjective. We shall prove this in a more general situation, by considering
a simplicial presheaf $F$ on $\Sm k$. Let
$$\god:\spaces k\To \spaces k$$
be the Godement resolution functor with respect to Nisnevich points;
see \cite[p. 66]{MV}.
We also have an $\AA^1$-resolution functor
$$\sing:\spaces k\To \spaces k$$
which associates to a simplicial presheaf $F$ the simplicial
presheaf $\sing(F)$ defined by
$$\sing(F)_n=\sHom(\Delta^n_k,F_n)\, ,$$
where $\sHom$ denotes the internal Hom in the category of presheaves, while
$$\Delta^n_k=\spec{k[t_0,\ldots,t_n]/(\sum_i t_i -1)}\simeq \AA^n_k$$
is the algebraic simplex.
By virtue of \cite[Lemma 2.3.20]{MV},
we then have an explicit $\AA^1$-resolution functor $\exA$
(i.e. fibrant resolution functor in the model category $\LA\spaces k$)
defined by
$$\exA=\god\circ (\sing)^\omega\circ \god$$
(remark that the Godement resolution does not detect the difference between
a presheaf and its asociated sheaf, so that it does not make any difference
to work with presheaves or sheaves here).

Given a simplicial presheaf $F$, we then have a natural identification
$$\pi_0(\exA(F)(\spec R))\simeq \Hom_{\AH(k)}(\spec R,F)\, .$$
As $R$ is henselian,  the map
$$\pi_0(F(\spec R))\To \pi_0(\god(F)(\spec R))$$
is bijective. On the other hand, 
the set $\pi_0(\sing(F)(\spec R))$ is the quotient of $F_0(\spec R)$ by the
$\AA^1$-homotopy relation. This implies immediately
the proposition.
\end{proof}

\begin{thm}\label{thm:biratA1connected}
For any smooth and proper $k$-variety $Y$, there is a canonical bijection
$$Y(k)/R\simeq \Hom_{\AH(k)}(\spec k, Y)\, .$$
In particular, the functor
$$Y\longmapsto\Hom_{\AH(k)}(\spec k,Y)$$
is a birational invariant of smooth and proper $k$-schemes.
\end{thm}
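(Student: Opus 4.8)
The strategy is to factor the map $Y(k)/R \to \Hom_{\AH(k)}(\spec k, Y)$ through the birational homotopy category and to combine the injectivity coming from Proposition~\ref{birhomtopyyoneda} with the surjectivity coming from Proposition~\ref{prop:A1surj}. Concretely, consider the localization functor $\gamma_! \colon \AH(k) \to \birH(k)$ of Corollary~\ref{cor:HbirlocHA1}. Since $\spec k$ and $Y$ both come from $\Sm k$, the functor $\gamma_!$ induces a commutative triangle
\[\xymatrix{
Y(k)/R \ar[r] \ar[dr]_{\sim} & \Hom_{\AH(k)}(\spec k, Y) \ar[d]^{\gamma_!}\\
& \Hom_{\birH(k)}(\spec k, Y)
}\]
in which the diagonal arrow is the bijection furnished by Proposition~\ref{birhomtopyyoneda} (applied with $X = \spec k$, noting $k(X) = k$). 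Hence the top horizontal map is automatically \emph{injective}, and it remains only to prove it is surjective.

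For surjectivity I would invoke Proposition~\ref{prop:A1surj} with $F = Y$ and $X = \spec k$: the map $[\spec k, Y]_{\AA^1} \to \Hom_{\AH(k)}(\spec k, Y)$ becomes surjective after Nisnevich sheafification. The key point is that $\spec k$ is already a Nisnevich-local scheme — it is the spectrum of a field, hence certainly a henselian local ring — so Nisnevich sheafification does not change anything when evaluating at $\spec k$; that is, the map $[\spec k, Y]_{\AA^1} \to \Hom_{\AH(k)}(\spec k, Y)$ is itself surjective (this is exactly the ``henselian local ring'' reformulation used in the proof of Proposition~\ref{prop:A1surj}). Combined with the identification $Y(k)/R = [\spec k, Y]_{\AA^1}$ recorded just before Proposition~\ref{prop:A1surj}, this gives surjectivity of the top map, and together with the injectivity above we conclude it is bijective.

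Finally, the birational invariance is immediate: by Proposition~\ref{birhomtopyyoneda} the set $\Hom_{\birH(k)}(\spec k, Y) \simeq Y(k(\spec k))/R = Y(k)/R$ depends only on the class of $Y$ in $S_b^{-1}\Sm k$ (indeed on its birational equivalence class among smooth proper varieties), and we have just identified this with $\Hom_{\AH(k)}(\spec k, Y)$; alternatively, two stably birational — in fact merely birational — smooth proper $Y, Y'$ become isomorphic in $\birH(k)$, hence their sets of $R$-equivalence classes of rational points, and thus their sets of $\AA^1$-homotopy classes of maps from $\spec k$, agree.

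**Main obstacle.** The genuinely substantial inputs — the injectivity via the birational localization (Theorem~\ref{thm:Requiv} and Proposition~\ref{birhomtopyyoneda}) and the surjectivity via the Godement/algebraic-simplex resolution (Proposition~\ref{prop:A1surj}) — are already established. So the only point requiring care here is the compatibility: one must check that the various canonical maps $Y(k)/R \to \Hom_{\AH(k)}(\spec k,Y)$, $[\spec k,Y]_{\AA^1} \to \Hom_{\AH(k)}(\spec k,Y)$, and $Y(k)/R \to \Hom_{\birH(k)}(\spec k,Y)$ all fit into one commutative diagram compatibly with $\gamma_!$, i.e. that ``the localization functor does the obvious thing on representables''. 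This is formal but should be spelled out, since it is what glues injectivity and surjectivity together.
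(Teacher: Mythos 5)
Your proof is correct and follows essentially the same route as the paper: factor through $\gamma_!\colon\AH(k)\to\birH(k)$, get injectivity from the bijectivity of the composite (Theorem~\ref{thm:Requiv} via Proposition~\ref{birhomtopyyoneda}), and get surjectivity from Proposition~\ref{prop:A1surj} evaluated at the henselian local scheme $\spec k$. Your explicit remark that Nisnevich sheafification changes nothing at $\spec k$ is a useful clarification that the paper leaves implicit.
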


\begin{proof}
The localization functor $\gamma_!$ of Corollary \ref{cor:HbirlocHA1}
induces a natural map
$$\Hom_{\AH(k)}(\spec k,Y)\To \Hom_{\birH(k)}(\spec k,Y)\, .$$
On the other hand, by virtue of Theorem \ref{thm:Requiv}, the composed map
$$[\spec k,Y]_{\AA^1}=Y(k)/R\To \Hom_{\AH(k)}(\spec k,Y)\To \Hom_{\birH(k)}(\spec k,Y)$$
is bijective. Therefore, by Proposition \ref{prop:A1surj}, we obtain that the map
$$Y(k)/R\To \Hom_{\AH(k)}(\spec k,Y)$$
is bijective.
\end{proof}

\begin{paragr}
Let $F$ be a simplicial presheaf over $\Sm k$. We denote by $\pi^{\AA^1}_0(F)$ the
Nisnevich sheaf associated to the presheaf
$$U\longmapsto\Hom_{\AH(k)}(U,F)\, .$$
\end{paragr}

\begin{cor}\label{cor:biratA1connected}
For any smooth and proper $k$-variety $Y$, and
for any field extension of finite type $L/k$, there is a natural identification
$$Y(L)/R\simeq \pi^{\AA^1}_0(Y)(L)\, .$$
\end{cor}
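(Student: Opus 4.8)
The idea is to reduce the statement over an arbitrary finitely generated extension $L/k$ to the absolute case already settled in Theorem~\ref{thm:biratA1connected}, by exploiting a base-change/invariance argument together with the defining property of $\pi^{\AA^1}_0$ as a Nisnevich sheaf. First I would unwind the definition: $\pi^{\AA^1}_0(Y)$ is the Nisnevich sheafification of $U\mapsto\Hom_{\AH(k)}(U,Y)$, so for a finitely generated field extension $L/k$ the value $\pi^{\AA^1}_0(Y)(L)$ is computed as the stalk (colimit of sections over a filtered system of smooth affine $k$-schemes with function field $L$, or equivalently the value of the sheafification at the pro-object $\spec L$). The key input from the previous results is Proposition~\ref{prop:A1surj}, which says precisely that the canonical map $[X,Y]_{\AA^1}\to\Hom_{\AH(k)}(X,Y)$ becomes surjective after Nisnevich sheafification; applied at $\spec L$ (a henselian — indeed field — local ring of geometric origin, after passing to a model) this gives surjectivity of $[\spec L,Y]_{\AA^1}\to\pi^{\AA^1}_0(Y)(L)$ and, by the $\AA^1$-invariance/valuative-criterion discussion preceding Theorem~\ref{thm:Requiv}, the source is exactly $Y(L)/R$.

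So the real content is injectivity, i.e. that $Y(L)/R\to\pi^{\AA^1}_0(Y)(L)$ is injective. Here I would invoke Theorem~\ref{thm:biratA1connected} over the base field $L$ rather than $k$: that theorem gives a bijection $Y_L(L)/R\simeq\Hom_{\AH(L)}(\spec L,Y_L)$ for the smooth proper $L$-variety $Y_L=Y\times_k L$, and $Y_L(L)/R=Y(L)/R$. It therefore suffices to produce a natural map $\pi^{\AA^1}_0(Y)(L)\to\Hom_{\AH(L)}(\spec L,Y_L)$ compatible with the two maps out of $Y(L)/R$; then injectivity over $L$ forces injectivity of our map. Such a comparison should come from the base-change functor $\AH(k)\to\AH(L)$ (extension of scalars along $\spec L\to\spec k$), which sends $Y$ to $Y_L$ and $U$ (smooth over $k$) to $U_L$, together with the identification of $\pi^{\AA^1}_0(Y)(L)$ as a filtered colimit $\varinjlim_U\Hom_{\AH(k)}(U,Y)$ over models $U$ of $L$: each class is represented by some $U$ with a map in $\AH(k)$, and base change along $\spec L\to U$ then lands in $\Hom_{\AH(L)}(\spec L,Y_L)$. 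One checks this is well defined (independent of the chosen model) and that the triangle with $Y(L)/R$ commutes, which is a routine naturality verification.

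The main obstacle I anticipate is the bookkeeping around the stalk description of the Nisnevich sheafification at a non-closed point $\spec L$ and its compatibility with base change: one must be careful that $\pi^{\AA^1}_0(Y)(L)$ really is $\varinjlim_U\Hom_{\AH(k)}(U,Y)$ over the filtered system of smooth $k$-models $U$ of $L$ (this uses that Nisnevich sheafification commutes with the relevant filtered colimits and that $\AH(k)$-Hom sets are compatible with such colimits of affine schemes, which for smooth proper $Y$ follows from continuity/finite-presentation properties of the localized model structure), and that the base-change functor $\AH(k)\to\AH(L)$ is exactly the filtered colimit, over those $U$, of restriction along $\spec L\to U$. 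Once this identification is in place, the argument is formal: surjectivity from Proposition~\ref{prop:A1surj} and injectivity from Theorem~\ref{thm:biratA1connected} applied over $L$ combine to give the bijection $Y(L)/R\simeq\pi^{\AA^1}_0(Y)(L)$, and naturality in $L$ is inherited from naturality of all the constructions involved.
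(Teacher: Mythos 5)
Your argument is correct in outline, but it takes a detour that the paper's own setup renders unnecessary. The paper prints no proof, and the one it clearly intends runs entirely over $k$: since sheafification preserves stalks, $\pi^{\AA^1}_0(Y)(L)=\varinjlim_U\Hom_{\AH(k)}(U,Y)$ over the smooth $k$-models $U$ of $L$, and for \emph{every} such $U$ Proposition~\ref{birhomtopyyoneda} (that is, Theorem~\ref{thm:Requiv} for a general source $X$, not just $\spec k$) already gives $\Hom_{\birH(k)}(U,Y)=Y(k(U))/R=Y(L)/R$; one then repeats the argument of Theorem~\ref{thm:biratA1connected} verbatim in the colimit, with $\gamma_!$ from Corollary~\ref{cor:HbirlocHA1} supplying the retraction onto $Y(L)/R$ and Proposition~\ref{prop:A1surj} (whose proof is literally the statement at the Nisnevich point $\spec L$) supplying surjectivity. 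You instead obtain injectivity by base-changing to $\AH(L)$ and invoking Theorem~\ref{thm:biratA1connected} over $L$. That works, but it imports avoidable technical overhead: you must construct the functor $\AH(k)\To\AH(L)$ for a non-finite (only pro-smooth) extension and check its compatibility with the filtered system of models; you must note that $Y\times_kL$ need not be integral, so the theorem as stated for \emph{varieties} has to be extended to finite disjoint unions of smooth proper $L$-varieties; and you must verify that the inputs to that theorem (in particular \cite[Theorem 6.6.3]{birgeom}) are available over $L$. None of these is a genuine obstruction, and your stalk identification and surjectivity step coincide with the paper's; but the over-$k$ argument via Proposition~\ref{birhomtopyyoneda} is shorter and is the one the word ``corollary'' points to.
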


\section{Relationship with the work of Asok and Morel}

Theorem \ref{thm:biratA1connected} and Corollary \ref{cor:biratA1connected}
are special cases of results of Aravind Asok and Fabien Morel
\cite[Theorem 2.4.3 and Corollary 2.4.9]{AM}, with the difference that
we used Proposition \ref{prop:birA1hmt} to see that, in a precise sense, there is
no difference in working up to birational equivalence or up to
stable birational equivalence (which is stated in a conceptual way
as Corollary \ref{cor:HbirlocHA1}). Let us clarify the relationship between their work and ours:

Asok and Morel consider a composition
\begin{equation}\label{eq3}
\pi_0^{ch}(X)\to \pi_0^{\AA^1}(X)\to \pi_0^{b\AA^1}(X)
\end{equation}
for any proper scheme $X$ over a field, where $\pi_0^{ch}(X)$ is the sheaf associated to the
presheaf of ``na\"\i ve $\AA^1$-equivalence classes" (so that $\pi_0^{ch}(X)(K)= X(K)/\AA^1 =
X(K)/R$ for any function field $K$), and $\pi_0^{b\AA^1}(X)$ is defined \emph{a priori} as a
birational sheaf:
\begin{gather*}
\pi_0^{b\AA^1}(X)(U) = \pi_0^{b\AA^1}(X)(\spec{k(U)}\\
\pi_0^{b\AA^1}(X)(K) = X(K)/R.
\end{gather*}

(The theorem is that $\pi_0^{b\AA^1}(X)(U)$ has the structure of a presheaf in $U$.) Like us
(cf. Proposition \ref{prop:A1surj}), they show that when evaluated at a function field, the
first map in \eqref{eq3} is surjective, hence all maps are bijective since the composition is
the identity.

We consider instead a composition
\begin{equation}\label{eq4}
\pi_0^{ch}(X)\to \pi_0^{\AA^1}(X)\to \pi_0^{b}(X)
\end{equation}
for any smooth scheme $X$, where $\pi_0^b(X)$ is the sheaf associated to the presheaf $U\mapsto
\Hom_{\birH(k)}(U,X)$. Here, evaluated at a function field, the composition is bijective for $X$ proper by Proposition \ref{birhomtopyyoneda}.


\providecommand{\bysame}{\leavevmode\hbox to3em{\hrulefill}\thinspace}
\providecommand{\MR}{\relax\ifhmode\unskip\space\fi MR }
\providecommand{\MRhref}[2]{%
\href{http://www.ams.org/mathscinet-getitem?mr=#1}{#2}
}
\providecommand{\href}[2]{#2}

\end{document}